\documentclass[12pt,reqno,twoside]{amsart}
\usepackage{amsthm,amsmath,amssymb,mathrsfs,amsbsy}
\usepackage{graphicx,epsfig,wrapfig,sidecap,subcaption}
\usepackage[all]{xy}
\usepackage{proof}
\usepackage[usenames]{color}
\usepackage{hyperref}
\usepackage{bm}
\usepackage[capitalize]{cleveref}
\usepackage{stmaryrd}
\usepackage{tikz}
\usepackage{mathdots}

\xyoption{dvips}
\xyoption{color}
\usepackage{a4wide} 	

\usepackage{color}

\title[Visibility Properties of Spiral Sets]{Visibility Properties of Spiral Sets}
\author{Faustin Adiceam and Ioannis Tsokanos}
\date{}

\newcommand{\Z}{{\mathbb{Z}}}

\newcommand{\R}{{\mathbb{R}}}

\newcommand{\Sph}{\mathbb{S}}

\newcommand{\dist}{\textrm{dist}}

\theoremstyle{plain}
\newtheorem{thm}{Theorem}[section]

\newtheorem{prop}[thm]{Proposition}

\theoremstyle{definition}
\newtheorem{definition}[thm]{Definition}

\numberwithin{equation}{section}
\swapnumbers

\usepackage{color}

\usepackage{tikz-3dplot}
\tdplotsetmaincoords{80}{110}

\begin{document}

\begin{abstract}
A spiral in $\R^{d+1}$ is defined as a set of the form $\left\{\sqrt[d+1]{n}\cdot\bm{u}_n\right\}_{n\ge 1},$ where $\left(\bm{u}_n\right)_{n\ge 1}$ is a spherical sequence. Such point sets have been extensively studied, in particular in the planar case $d=1$, as they then serve as natural models describing phyllotactic structures (i.e.~structures representing configurations of leaves on a plant stem).

Recent progress in this theory provides a fine analysis of the distribution of spirals (e.g., their covering and packing radii). Here, various concepts of visiblity from discrete geometry are employed to characterise density properties of such point sets. More precisely, necessary an sufficient conditions are established for a spiral to be (1) an \emph{orchard} (a ``homogeneous'' density property defined by P\`olya), (2) a \emph{uniform orchard} (a concept introduced in this work), (3) a \emph{set with no visible point} (implying that the point set is dense enough in a suitable sense) and (4) a \emph{dense forest} (a quantitative and uniform refinement of the previous concept).
\end{abstract}

\maketitle

\begin{center}
\emph{\`A Assumpta Adic\'eam.}
\end{center}



\section{Introduction}\label{sec:introduction} 

Define a spiral in $\R^{d+1}$ as a set of points of the form 
\begin{equation}\label{defspiral}
\mathfrak{S}_d\left(\bm{U}\right)\;=\; \left\{\sqrt[d+1]{n}\cdot\bm{u}_n\right\}_{n\ge 1},
\end{equation}
where $\bm{U}=\left(\bm{u}_n\right)_{n\ge 1}$ is a sequence in the $d$--dimensional unit sphere $\Sph^d\subset\R^{d+1}$. Such discrete point sets appear naturally in the modelisation of phyllotactic models (i.e.~in the modelisation of configurations of leaves on a plant stem) and have long been studied in this respect. Recently, Akiyama~\cite{akiyama} initiated a systematic study of their distribution  properties. In the planar case $d=1$,  he thus established that the spiral obtained from the circle sequence $\bm{U}=\left(\exp\left(2i\pi n\theta\right)\right)_{n\ge 1}$ is \emph{Delone} if and ony if $\theta$ is a badly approximable number; that is, if and only if 
\begin{equation}\label{defbad}
\inf_{q\ge 1, r\in\Z} q\left|q\theta -r\right|>0.
\end{equation} 
Recall that a set is Delone in $\R^{d+1}$ if it is both uniformly discrete (the infimum of the distances between two distinct points in the set is positive) and relatively dense (the supremum of the distances from any point in $\R^{d+1}$ to the set is finite). Marklof~\cite{marklof} then generalised this result by providing a necessary and sufficient condition for a given planar spiral to be Delone. \\

The resulting question of determining whether this theory can be extended to higher dimensions was later settled by the authors~: they indeed established in~\cite{aditsok} a necessary and sufficient condition for a spiral set to be Delone in $\R^{d+1}$ and described an explicit construction proving that such point sets do exists in any dimension. In particular, given a sequence $\left(\bm{u}_n\right)_{n\ge 1}$ in $\Sph^d$ and an increasing function $f~:\R_+\rightarrow\R_{+}$, it is noticed in~\cite[\S 1]{aditsok} that a necessary condition for a point set of the form $\left\{f(n)\cdot \bm{u}_n\right\}_{n\ge 1}$ to be uniformy discrete and relatively dense is that $0<\liminf_{n\rightarrow\infty}f(n)/\sqrt[d+1]{n}$ and $\limsup_{n\rightarrow\infty} f(n)/\sqrt[d+1]{n}<\infty$, respectively. This motivates the choice of the factor $\sqrt[d+1]{n}$ in the definition of a spiral set in~\eqref{defspiral}.\\

The goal of this paper is to study distribution properties of spiral sets from a complementary standpoint; namely, from that of so--called visibility problems in discrete geometry which quantify in suitable senses the density of a point set. More precisely, the goal is to establish necessary and sufficient conditions for a spiral set to become arbitrarily close to line segments, provided they are long enough. This can be formalised in several distinct ways.\\

A first possible formalisation is motivated by P\'olya's classical orchard's problem stated in~\cite[Problem 239]{polya}~: ``how thick  must  the trunks of the trees in a regularly spaced circular orchard grow if they are to block completely the view from the center?''. Assume that the observer stands at the origin, that  the centres of the trees are located at the nonzero points of the lattice $\Z^2$ and that they have  Euclidean norms at most $Q>0$. P\'olya (\emph{ibid.}), based on a method due to Speiser, and then Honsberger~\cite{honsb}, based on Mikowski's Convex Body Theorem, showed that it is enough for the trees to have radius $1/Q$ to block any ray emerging from the origin. Slightly sharper estimates are also obtained by Allen~\cite{allen}, and some generalisations studied in~\cite[\S 4]{adiarith}.\\

The following definition introduces the concept of an orchard in full generality. Before stating it, recall that a set $\mathcal{B}\subset\R^{d+1}$ has \emph{finite density} if $$\limsup_{T\rightarrow \infty}\frac{\#\left(B_2\left(\bm{0}, T\right)\cap\mathcal{B}\right)}{T^{d+1}}\;<\;\infty,$$ where $\#$ denotes the cardinality function and where, given $\bm{x}\in\R^{d+1}$ and $T>0$, $B_2\left(\bm{x}, T\right)$ stands for the Euclidean ball centered at $\bm{x}$ with radius $T>0$. In visibility problems, it is natural to work under the assumption that point sets have finite density for  the visibility conditions under consideration not to be trivially met. It is easy to see that spiral sets defined in~\eqref{defspiral} have finite density.

\begin{definition}[Orchard] \label{orchard}
A subset $\mathcal{O}\subset \R^{d+1}\backslash\left\{\bm{0}\right\}$ is an \emph{orchard} if it has  finite density and if there exists a function $V~: \epsilon\in(0,1)\rightarrow V(\epsilon)\in\R_+$, increasing as $\epsilon\rightarrow 0^+$, such that the following holds~:  for every $\epsilon>0$ and every direction $\bm{v}\in\Sph^d$, there exists a point $\bm{o}\in\mathcal{O}$ and a real number $0<t<V(\epsilon)$ such that $\left\|\bm{o} - t\bm{v}\right\|_2\le \epsilon$.
\end{definition}


As is not hard to see, a visibility function $V$ in an orchard in dimension $(d+1)$ has to satisfy the bound 
\begin{equation}\label{boundvisibility}
V(\epsilon)\ge c\cdot \epsilon^{-d}
\end{equation} 
for some constant $c>0$. This will be justified in detail in Section~\ref{2}.\\

In Definition~\ref{orchard}, a given point set is required to become arbitrary close to long enough line segments which have the origin as one of their end points. Removing this assumption while keeping the constraint that the line segments must be supported on  directions passing through the origin leads one to the concept of a \emph{uniform orchard}, which is introduced in this paper. 

\begin{definition}[Uniform Orchard] \label{ouv}
A subset $\mathcal{O}\subset \R^{d+1}\backslash\left\{\bm{0}\right\}$ is a \emph{uniform orchard} if it has  finite density and if there exists a function $V~: \epsilon\in(0,1)\rightarrow V(\epsilon)\in\R_+$, increasing as $\epsilon\rightarrow 0^+$, such that the following holds~:  for every $\epsilon>0$, every $t_0\in \R$ and every direction $\bm{v}\in\Sph^d$, there exists a point $\bm{o}\in\mathcal{O}$ and a real number $t_0<t<t_0+V(\epsilon)$ such that $\left\|\bm{o}- t\bm{v}\right\|_2\le \epsilon$.
\end{definition}

Clearly, a uniform orchard is also an orchard (just take $t_0=0$ in the above definition). The converse, however, does not hold and, despite the apparent similarity in their definitions, these two concepts should be thought of as being rather different in nature~: indeed, Section~\ref{2} provides examples of an orchard and of a uniform orchard which have drastically different visibility properties (in a sense made precise therein).\\

Removing the assumption in Definitions~\ref{orchard} and~\ref{ouv} that the line segments should be supported on directions passing through the origin leads one to  further concepts of visibility which have previously appeared in the literature --- see, e.g., \cite{bossol}. In order to state them, define the \emph{ray emanating from a point $\bm{x}\in\R^{d+1}$ in direction $\bm{v}\in\Sph^d$} as the half--line 
\begin{equation}\label{defhalfline}
L(\bm{x}, \bm{v})\;=\; \left\{\bm{x}+t\bm{v}\right\}_{t\ge 0}.
\end{equation} 
Also, given two subsets $\mathcal{A}, \mathcal{B}\subset\R^{d+1}$, set $$\dist_2(\mathcal{A}, \mathcal{B})\; =\; \inf_{\bm{a}\in \mathcal{A}, \bm{b}\in \mathcal{B}}\left\|\bm{a}-\bm{b}\right\|_2.$$

\begin{definition}[Visible Points] Let $\mathfrak{Y}\subset\R^{d+1}$ be non--empty. Fix 
a direction $\bm{v}\in\Sph^d$.

The set of \emph{visible points of $\mathfrak{Y}$ in direction $\bm{v}$} is defined as $$\textrm{Vis}\left(\mathfrak{Y}, \bm{v}\right)\;=\; \left\{\bm{x}\in\R^{d+1}\: :\: \dist_2\left(L(\bm{x}, \bm{v}), \,\mathfrak{Y}\backslash\left\{\bm{x}\right\} \right)>0\right\}.$$ 

The set of \emph{visible points of $\mathfrak{Y}$} is defined as $$\textrm{Vis}\left(\mathfrak{Y}\right)\;=\; \left\{\bm{x}\in\R^{d+1}\: :\: \exists \bm{v}\in\Sph^d, \; \bm{x}\in \textrm{Vis}\left(\mathfrak{Y}, \bm{v}\right)\right\}.$$

The set of \emph{hidden points of $\mathfrak{Y}$} is the complement $\R^{d+1}\backslash \textrm{Vis}\left(\mathfrak{Y}\right)$.
\end{definition}

Making the concept of $\epsilon$--hidden points more quantitative leads one to the concept of a \emph{dense forest}.

\begin{definition}[Dense Forest]
A subset $\mathcal{F}\subset \R^{d+1}$ is an \emph{dense forest} if it has  finite density and if there exists a function $V~: \epsilon\in(0,1)\rightarrow V(\epsilon)\in\R_+$, increasing as $\epsilon\rightarrow 0^+$, such that the following holds~:  for every reals $\epsilon>0$ and $t_0\in \R$, every vector $\bm{x}\in\R^{d+1}$ and every direction $\bm{v}\in\Sph^d$, there exists a point $\bm{f}\in\mathcal{F}$ and a real number $t_0<t<t_0+V(\epsilon)$ such that $\left\| \bm{x}+t\bm{v}-\bm{f}\right\|_2\le \epsilon$.
\end{definition}

Clearly, a dense forest is both a set whose hidden point set is the full space and a uniform orchard (and therefore also an orchard). The latter implies in particular that the lower bound~\eqref{boundvisibility} still holds for the visibility function in a dense forest.\\

The concept of a dense forest emerged in~\cite{Bishop} in a question of rectifiability of curves and was later substantially developed in relation with the Danzer Problem, e.g.~in~\cite{Alon, Adiceam, ASW, SW, tsok}. The Danzer Problem asks for the existence of a set with  finite density intersecting any convex body of volume one. Relaxing the assumption on the volume of the convex bodies gives rise to the construction of dense forests in this context~: as detailed in~\cite{adidanz}, a set solution to the Danzer problem in $\R^{d+1}$ is a dense forest with visibility $V(\epsilon)=C\cdot \epsilon^{-d}$ for some $C>0$ (which is, up to the multiplicative constant, optimal in view of~\eqref{boundvisibility}), and the converse holds true in the planar case $d=1$. \\

Some of the visibility properties of the spiral $\mathfrak{S}_d\left(\bm{U}\right)$ in~\eqref{defspiral} will be expressed in terms of a covering property of the  spherical sequence $\bm{U}$  introduced in the following definition. To this end, the natural geodesic metric on $\Sph^d$ is denoted by $\dist_{\Sph^d}\left(\,\cdot\, ,\, \cdot \,\right)$.

\begin{definition}[Uniform Covering Parameter of a Spherical Sequence] Let $\bm{U}=\left(\bm{u}_n\right)_{n\ge 1}$ be an infinite sequence in $\Sph^d$. Given a constant $C>0$, define its \emph{uniform covering parameter at scale $C>0$} as the quantity $$R_d(\bm{U}, C)\;=\; \sup_{\bm{v}\in\Sph^d}\;\;\;\sup_{m\ge 0}\;\;\;\sup_{N\ge 1}\;\;\; \sqrt[d]{N}\cdot\left(\inf_{1\le n\le C\cdot N}\dist_{\Sph^d}\left(\bm{u}_{m+n} ,\bm{v}\right)\right).$$

In the case that the sequence $\bm{U}$ is finite, say $\bm{U}=\left(\bm{u}_n\right)_{1\le n\le N}$ for some integer $N\ge 1$, set $$R_d(\bm{U})\;=\; \sup_{\bm{v}\in\Sph^d}\;\min_{1\le n\le N}\; \dist_{\Sph^d}\left(\bm{u}_{n} ,\bm{v}\right).$$
\end{definition}

The main result of this paper is then the following~:

\begin{thm}\label{main}
Let $\mathfrak{S}_d\left(\bm{U}\right)$ be a spiral point set in $\R^{d+1}$ defined   from a spherical sequence $\bm{U}=\left(\bm{u}_n\right)_{n\ge 1}$ as in~\eqref{defspiral}. Let $V~: \epsilon\in(0,1)\rightarrow V(\epsilon)\in\R_+$ be a function increasing  as $\epsilon\rightarrow 0^+$. Given constants $c_{\bm{U}}\ge 1$ and $\kappa_{\bm{U}}>0$ depending only on $\bm{U}$, define the auxiliary function 
\begin{equation}\label{defW}
W~:\epsilon\mapsto c_{\bm{U}}\cdot V(\kappa_{\bm{U}}\cdot\epsilon).
\end{equation} 

Given a real $x\ge 0$ and an integer $h\ge 1$, denote by $A_{\bm{U}}\left(h, x\right)$ the finite set 
\begin{equation}\label{defAuhx}
A_{\bm{U}}\left(h, x\right)\;=\; \left\{\bm{u}_{h^{d+1}+i}\; :\; 1\,\le\, i \,\le\, x \right\}\;\subset\; \Sph^d.
\end{equation}
\\
\begin{enumerate}
\item \textbf{(Spirals and orchards)} The spiral $\mathfrak{S}_d\left(\bm{U}\right)$ forms an orchard with visibility $W$ for some constant $c_{\bm{U}}\ge 1$ and $\kappa_{\bm{U}}>0$ if and only if the following holds  for some constants $\kappa, K>0$~: for all $\epsilon>0$ small enough and  all $\bm{v}\in\Sph^d$, there exists an integer $n$ such that $1\le n\le K\cdot V(\epsilon)^{d+1}$ and $\dist_{\Sph^d}\left(\bm{u}_n, \bm{v}\right)\le \kappa\cdot\epsilon/\sqrt[d+1]{n}$\label{pt1}.\\

\item \textbf{(Spirals and uniform orchards)} 
The following two claims are equivalent~: \label{pt2}\\
\begin{enumerate}
\item  There exist constants $c_{\bm{U}}\ge 1$ and $\kappa_{\bm{U}}>0$ such that the spiral $\mathfrak{S}_d\left(\bm{U}\right)$ is a uniform orchard with visibility function $W$.\label{pt2a}\\

\item  There exist constants $K, \kappa_{\bm{U}}>0$ and $c_{\bm{U}}\ge 1$ such that  $$\sup_{\epsilon\in (0,1)}\;\sup_{h\ge W\left(\epsilon\right)}\; h\cdot \epsilon^{-1}\cdot R_d\left(A_{\bm{U}}\left(h,\,K\cdot h^d \cdot W\left(\epsilon\right)\right)\right)\; <\; \infty, $$ where the function $W$ is defined as in~\eqref{defW}. \label{pt2b}\\
\end{enumerate}

Furthermore~:\\
\begin{itemize}
\item[(i)] whenever $R_d\left(\bm{U}, C\right)<\infty$ for some $C>0$, the spiral $\mathfrak{S}_d\left(\bm{U}\right)$ is a uniform orchard with optimal visibility $C_{\bm{U}}\cdot \epsilon^{-d}$ for some constant $C_{\bm{U}}>0$.\label{2i}\\

\item[(ii)] a sufficient condition for the set $\mathfrak{S}_d\left(\bm{U}\right)$ to be a uniform orchard is that the relation 
\begin{equation*}\label{condi3}
\lim_{x\rightarrow\infty}\left(\sup_{h> x} \,h\cdot R_d\left(A_{\bm{U}}\left(h, h^dx\right)\right)\right)\;=\;0
\end{equation*}
should hold. In this case, there exist constants $c_{\bm{U}}\ge 1$ and $\kappa_{\bm{U}}>0$ such that it admits the function $W$ defined in~\eqref{defW} as a visibility function obtained from the map
\begin{equation}\label{defvisi}
V~:\epsilon\mapsto \sup\left\{x\ge 0\; :\; \sup_{h> x} \,h\cdot R_d\left(A_{\bm{U}}\left(h, h^dx\right)\right)\ge \epsilon\right\}.
\end{equation}
\end{itemize}

\item \textbf{(Visible points in spirals)} The following two claims hold~: \label{pt3}\\
\begin{itemize}
\item[(i)] The spiral $\mathfrak{S}_d\left(\bm{U}\right)$ has an empty set of visible points if and only if the following holds for some  constant $c>0$~:  for any $\epsilon>0$, any $\lambda\ge 0$ and any choice of orthogonal  directions $\bm{v}, \bm{w}\in\Sph^d$, there exist a real number $t$ and an integer $n\ge 1$  such that 
\begin{equation}\label{eqvisi}
\left|\sqrt[d+1]{n}-\sqrt{\lambda^2+t^2}\right|\le \epsilon  \qquad \textrm{and} \qquad \dist_{\Sph^d}\left(\bm{u}_n, \frac{\lambda\bm{v}+t\bm{w}}{\sqrt{\lambda^2+t^2}}\right)\le \frac{c\cdot\epsilon}{\sqrt{\lambda^2+t^2}}.\\
\end{equation} 
are satisfied.\\
\item[(ii)]  A spiral set which is a uniform orchard has an empty set of visible points (in other words,  all points in $\R^{d+1}$ are hidden). The converse, however, does not hold. \label{pt3}\\
\end{itemize}

\item \textbf{(Spirals and dense forests)}  \label{pt4} The spiral $\mathfrak{S}_d\left(\bm{U}\right)$ is a dense forest with visibility function $W$ for some constants $c_{\bm{U}}\ge 1$ and $\kappa_{\bm{U}}>0$ if and only if there exists a constant $c>0$ such that for any $\epsilon>0$, $\lambda\ge 0$, $t_0\in\R$ and any choice of orthogonal  directions $\bm{v}, \bm{w}\in\Sph^d$, there exist  $t\in\left[t_0, t_0+V\left(\epsilon\right)\right]$ and an integer $n\ge 1$ such that the inequalities~\eqref{eqvisi} are simultaneously met.\\ 
\end{enumerate}

\end{thm}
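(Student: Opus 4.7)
The common geometric fact underlying all four claims is the exact identity
\[
\|\bm{p}_n - \bm{q}\|_2^2 \;=\; \left(\sqrt[d+1]{n}-\sqrt{\lambda^2+t^2}\right)^2 \,+\, 2\sqrt[d+1]{n}\sqrt{\lambda^2+t^2}\,(1-\cos\theta_n),
\]
where $\bm{p}_n := \sqrt[d+1]{n}\bm{u}_n$ is a spiral point, $\bm{q} := \lambda\bm{w}+t\bm{v}$ with $\bm{v}, \bm{w} \in \Sph^d$ orthogonal, and $\theta_n := \dist_{\Sph^d}(\bm{u}_n, \bm{q}/\|\bm{q}\|_2)$. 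Since $1-\cos\theta \asymp \theta^2$ near $0$, the inequality $\|\bm{p}_n-\bm{q}\|_2 \le \epsilon$ is equivalent, up to absolute multiplicative constants (which will be absorbed into $W$ via~\eqref{defW}), to the conjunction of a radial inequality $|\sqrt[d+1]{n}-\sqrt{\lambda^2+t^2}| \lesssim \epsilon$ and an angular inequality $\theta_n \lesssim \epsilon/\sqrt{\lambda^2+t^2}$, which is precisely~\eqref{eqvisi}. My plan is to derive all four equivalences from this single dictionary by specialising the parameters $(\lambda, \bm{w}, t)$.

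For part (1), I set $\lambda = 0$ and restrict $t \in (0, V(\epsilon))$; the radial constraint then forces $n \lesssim V(\epsilon)^{d+1}$ and the angular one becomes the stated condition $\dist_{\Sph^d}(\bm{u}_n, \bm{v}) \le \kappa\epsilon/\sqrt[d+1]{n}$. Part (4) is obtained by keeping $\lambda$ and $\bm{w}$ free and imposing the sliding window $t \in (t_0, t_0+V(\epsilon))$. For part (3)(i), writing $\bm{x} = \alpha\bm{v}+\lambda\bm{w}$ with $\bm{w} \perp \bm{v}$ a unit vector shows that every point of the ray $L(\bm{x}, \bm{v})$ takes the form $\lambda\bm{w} + s\bm{v}$ for some $s \ge \alpha$, so vanishing distance from the ray to the spiral is equivalent to~\eqref{eqvisi} holding for every $\epsilon > 0$ at some such $s$.

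Part (2) is the technical core. With $\lambda = 0$ and $t_0 = h$, the Taylor expansion $\sqrt[d+1]{h^{d+1}+i} = h + i/((d+1)h^d)+O(i^2/h^{2d+1})$ converts the radial window of length $V(\epsilon)$ into an index window of the form $\{h^{d+1}+i : 1 \le i \le Kh^d V(\epsilon)\}$; the angular inequality then becomes $R_d(A_{\bm{U}}(h, Kh^d V(\epsilon))) \lesssim \epsilon/h$, which is equivalent to condition (2)(b) after rearrangement. The threshold $h \ge W(\epsilon)$ reflects the fact that when $h$ is small relative to $V(\epsilon)$ the segment $(t_0, t_0+V(\epsilon))$ contains a neighbourhood of the origin and the uniform orchard condition reduces to the orchard characterisation of part (1). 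For item (i), choosing $N = \lceil (Kh/\epsilon)^d \rceil$ in the supremum defining $R_d(\bm{U}, C)$ produces an index window of width $CN \asymp h^d\epsilon^{-d}$ in which $\Sph^d$ is covered at angular scale $R_d(\bm{U}, C)/\sqrt[d]{N} \asymp \epsilon/h$; this yields the optimal visibility $V(\epsilon) = O(\epsilon^{-d})$. For item (ii), inverting the hypothesis~\eqref{defvisi} produces the required uniform covering directly.

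Part (3)(ii) relies on a direction-deflection argument. Given $\bm{x}$, $\bm{v}$, $\epsilon$, decompose $\bm{x} = \alpha\bm{v}+\bm{x}^\perp$ and consider $\bm{q}_s := \bm{x}^\perp+s\bm{v}$ on the ray $L(\bm{x}, \bm{v})$ for $s$ arbitrarily large. Its normalised direction $\bm{v}_s$ satisfies $\sin\angle(\bm{v}_s, \bm{v}) \asymp \|\bm{x}^\perp\|_2/s$. Applying the uniform orchard in direction $\bm{v}_s$ at a scale $t_0 \approx s$ yields a spiral point within $\epsilon$ of some $s'\bm{v}_s$ with $|s'-s| \le W(\epsilon)$, and elementary geometry bounds the perpendicular distance from $s'\bm{v}_s$ to $L(\bm{x}, \bm{v})$ by $\lesssim W(\epsilon)\|\bm{x}^\perp\|_2/s$, which tends to $0$ as $s \to \infty$; hence the ray is approached arbitrarily closely. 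The strict non-reverse implication will be established by exhibiting a spherical sequence whose spiral is dense enough to hide every point of $\R^{d+1}$ yet whose index windows fail to cover $\Sph^d$ uniformly in $h$, so that (2)(b) fails. The principal obstacle throughout is the reciprocal conversion between radial and index windows in part (2), which produces the constants $c_{\bm{U}}, \kappa_{\bm{U}}$ in~\eqref{defW} and the threshold $h \ge W(\epsilon)$ in (2)(b); a secondary subtlety in part (3)(ii) is ensuring that the approximating spiral points lie near the half-line rather than its bi-infinite extension, which is handled by taking $s$ large enough relative to $\alpha$ and to $W(\epsilon)\|\bm{x}^\perp\|_2/\epsilon$.
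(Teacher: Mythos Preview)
Your proposal follows essentially the same approach as the paper for Points~1, 2, 3(i), 4, and the direct implication in 3(ii): all rest on the polar--coordinate identity $\|r\bm{u}-\rho\bm{v}\|_2 \asymp |r-\rho| + \sqrt{r\rho}\,\dist_{\Sph^d}(\bm{u},\bm{v})$ (your ``dictionary''), specialised to the relevant line or ray parametrisation, and your handling of the radial--to--index conversion in Point~2 via Taylor expansion matches the paper's. (One minor difference: in the implication 2(a)$\Rightarrow$2(b), the paper first passes through a finite $\delta$--net on $\Sph^d$ and applies the uniform--orchard property at each net point, whereas you apply it directly at an arbitrary $\bm{v}$; your route is slightly cleaner and equally valid.)

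The genuine gap is the converse in 3(ii). You only promise to ``exhibit a spherical sequence'' with no indication of how; this is the one place in the theorem requiring a non--trivial construction rather than a translation of definitions, and your one--line description does not contain the key idea. The paper's construction is as follows: start from a spiral $\mathfrak{S}_d(\bm{U}')$ that \emph{is} a uniform orchard with visibility $V(\epsilon)\ll\epsilon^{-d}$, fix a direction $\bm{v}_0$ and $\delta>0$, and reassign $\bm{u}_n$ so as to remove all spiral points from the $\delta$--tube around the ray $\R_+\bm{v}_0$ \emph{except} those lying inside a sparse sequence of annuli $\mathcal{A}_n$ of outer radius $n!$ and width $\rho_n=2V(2^{-n})$. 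The resulting spiral is not a uniform orchard (the tube contains arbitrarily long empty segments between successive annuli), yet any ray $L(\bm{x},\bm{v})$ eventually crosses each $\mathcal{A}_n$ along a chord which, by elementary trigonometry, lies within $O(\rho_n/n!)\to 0$ of a radial segment of length $\rho_n$; the uniform--orchard property of the original $\bm{U}'$, which is preserved inside $\mathcal{A}_n$, then places a spiral point within $2^{-n}+O(\rho_n/n!)$ of $L(\bm{x},\bm{v})$. Without some such annular mechanism there is no evident way to reconcile ``every ray is approached'' with ``some radial tube has arbitrarily long gaps'', so your proposal does not yet cover this part.
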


Note that, if the function $V$ has polynomial growth in the sense that there exists $\alpha>0$ such that $V\left(\epsilon\right)\le c\cdot \epsilon^{-\alpha}$ for some $c>0$, the constant $\kappa_{\bm{U}}>0$ appearing in the definition of the function $W$ in~\eqref{defW} can be absorbed in the constant $c_{\bm{U}}$.\\


Explicit spherical sequences with finite uniform covering parameters are constructed in any dimension  by the authors in~\cite[\S\S 3 \& 4]{aditsok}.  As a consequence of Points~\ref{2i} and~3(ii), this establishes the existence of spiral sets which are uniform orchards and have an empty set of visible points. Furthermore, the construction provided in~\cite[\S\S 3 \& 4]{aditsok} ensures that the resulting point sets enjoy the additional property of being Delone. An example in the  plane of such spirals is the Fermat (or sunflower) spiral 
\begin{equation}\label{equasunf}
\left\{\sqrt{n}\cdot\exp\left(2i\pi n\varphi\right)\right\}_{n\ge 1},\qquad  \textrm{where} \qquad  \varphi=(1+\sqrt{5})/2
\end{equation} 
is the Golden Ratio. It is represented in Figure~\ref{figspir}.\\

\begin{figure}[h!]
\centering
\begin{subfigure}{.5\textwidth}
  \centering
  \includegraphics[scale=0.6]{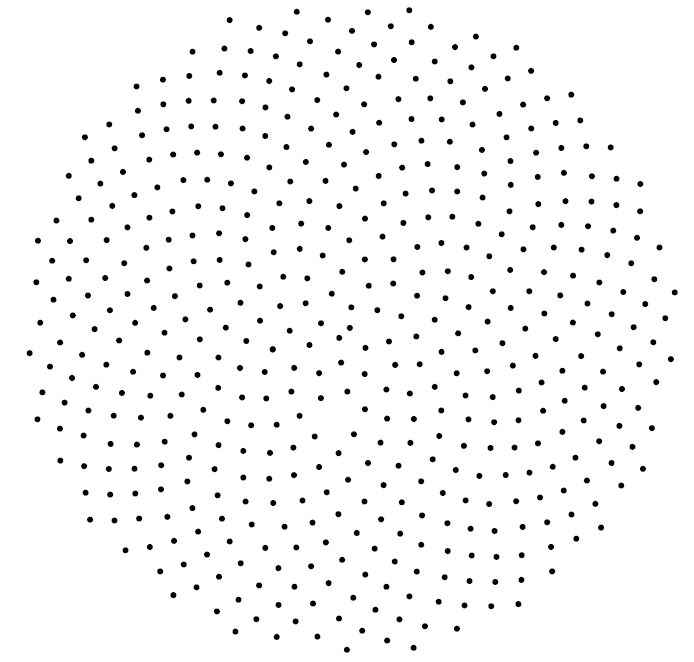}
\end{subfigure}%
\begin{subfigure}{.5\textwidth}
  \centering
  \includegraphics[scale=0.6]{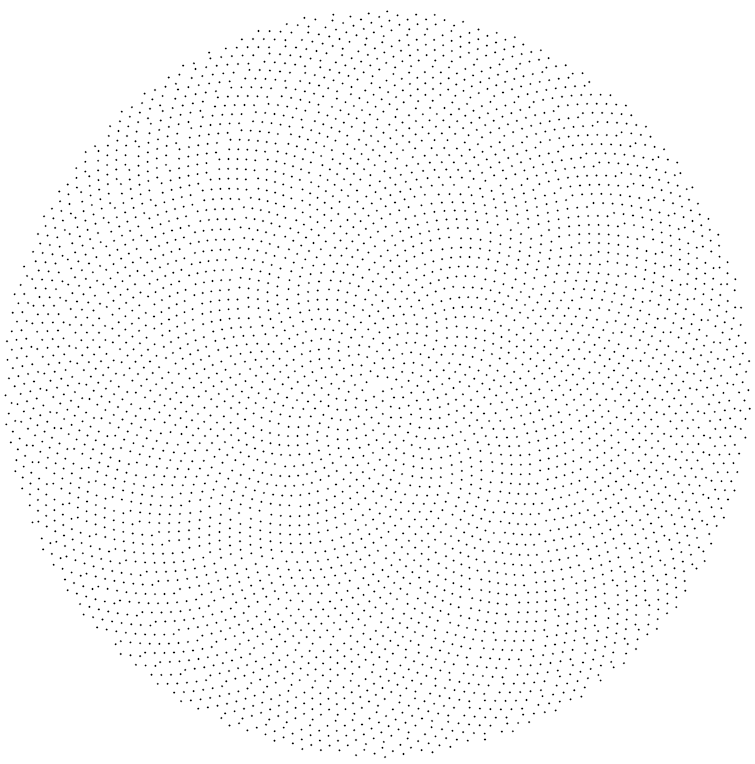}
\end{subfigure}
\caption{The sunflower spiral~\eqref{equasunf} at two different scales.}
\label{figspir}
\end{figure}

By contrast, the existence of a spiral set which is a dense forest remains an open question. As a matter of fact, the authors conjecture that there is no such set but have not been able to establish this claim. The case of the sunflower spiral 
is already elusive as specialising Point~\ref{pt4} to this situation leads one to a new kind of moving and shrinking target problem modulo one. This particular example is all the more intriguing as it has recently been established that the areas of the Voronoi cells of the spiral under consideration converge in an appropriate sense, therefore conferring to the resulting point set an additional regularity property. See~\cite{yamavor} for definitions and proofs.\\

Theorem~\ref{main} is established in Section~\ref{2}. Throughout, the Vinogradov symbol will be used to simplify notation~: given two positive real quantities $x$ and $y$, the existence of a constant $c>0$ independent of $x$ and $y$, referred to as the implicit constant, will be denoted by $x\ll y$. Similarly, $x\gg y$ means that $x^{-1}\ll y^{-1}$ and $x\asymp y$ that the relations $x\ll y$ and $x\gg y$ hold simultaneously.

\section{Proof of the Main Theorem}\label{2}

The claim on the lower bound~\eqref{boundvisibility}  for a visibility function in an orchard is first established. The proof rests on the following relations, which are readily verified~: given reals $\rho, r>0$ and unit vectors $\bm{u}, \bm{v}\in\Sph^d$, 
\begin{equation}\label{identity}
\dist_{\Sph^d}\left(\bm{u},\bm{v}\right)\;\asymp\;\left\|\bm{u}-\bm{v}\right\|_2\qquad \textrm{and }\qquad \left\|r\bm{u}-\rho\bm{v}\right\|_2\; \asymp\; \left|r-\rho\right|+\sqrt{r \rho}\left\|\bm{u}-\bm{v}\right\|_2
\end{equation}
(working in the plane spanned by $\bm{u}$ and $\bm{v}$, the second relation is easily deduced from the usual formula for the Euclidean distance between two points expressed in polar coordinates.)

\begin{proof}[Proof of the lower bound~\eqref{boundvisibility}] Let $\mathcal{O}\subset\R^{d+1}\backslash\left\{\bm{0}\right\}$ be an orchard with visibility function $V$. Enumerate the points in $\mathcal{O}$ in a set $\left\{\bm{x}_k=\rho_k\bm{u}_k\right\}_{k\ge 1}$ in such a way that $\left\|\bm{x}_k\right\|_2=\rho_k\le\left\|\bm{x}_{k+1}\right\|_2=\rho_{k+1}$ for all $k\ge 1$ (in particular, $\bm{u}_k\in\Sph^d$ for all $k\ge 1$). Since $\mathcal{O}$ has finite density, 
\begin{equation}\label{lowbrhok}
\rho_k\gg \sqrt[d+1]{k}.
\end{equation}

By assumption, given a direction $\bm{v}\in\Sph^d$, there exists a real number 
\begin{equation}\label{rhovsi}
\rho\in \left(0, V\left(\epsilon\right)\right)
\end{equation} 
and an index $k\ge 1$ such that $\epsilon\;\ge\; \left\|\rho\bm{v}-\rho_k\bm{u}_k\right\|_2$. 
From~\eqref{identity}, this implies that $\rho=\rho_k+\kappa\cdot\epsilon$ for some $\kappa\in\R$. If $\epsilon \ll \min_{j\ge 1}\rho_j$ (which may be assumed without loss of generality), then 
\begin{equation}\label{asymprhokrho}
\rho_k\asymp\rho.
\end{equation} 
Consequently, again from~\eqref{identity}, $\dist_{\Sph^d}\left(\bm{u}_k,\bm{v}\right)\;\asymp\;\left\|\bm{u}_k-\bm{v}\right\|_2\ll \epsilon/\rho_k.$

As this holds for any direction $\bm{v}\in\Sph^d$, it follows that the successive spherical caps centered at $\bm{u}_k$ with radii $C\cdot\epsilon/\rho_k$, where $1\le k\ll V(\epsilon)^{d+1}$ and where $C>0$ is a constant independent of $k$, cover the sphere $\Sph^d$. Here, the upper bound for the index $k$ is obtained by combining relations~\eqref{lowbrhok}, \eqref{rhovsi} and\eqref{asymprhokrho}.

Denoting by $A_d$ the surface area of  $\Sph^d$, one thus deduces that, for some constant $C>0$, $$A_d\;\ll\; \sum_{k=1}^{C\cdot V\left(\epsilon\right)^{d+1}}\frac{\epsilon^d}{\rho_k^d}\;\ll\; \sum_{k=1}^{V\left(\epsilon\right)^{d+1}}\frac{\epsilon^d}{k^{d/(d+1)}}\;\asymp\; \epsilon^d\cdot V\left(\epsilon\right).$$ The claim follows.
\end{proof}

The direct implication in Point~3(ii) in Theorem~\ref{main} is now proved seperately from the other points. This is because it holds in a generality greater than that of spiral point sets as detailed in the following proposition.

\begin{prop}\label{propunorvis}
Let $\mathcal{O}\subset\R^{d+1}\backslash\left\{\bm{0}\right\}$ be a uniform orchard. Then, its set of visible points is empty.
\end{prop}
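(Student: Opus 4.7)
The plan is to show that for every $\bm{x}\in\R^{d+1}$ and every direction $\bm{v}\in\Sph^d$, one has $\dist_2\left(L(\bm{x},\bm{v}),\mathcal{O}\setminus\{\bm{x}\}\right)=0$, so that $\bm{x}$ is not visible in any direction. The essential observation is geometric: although the uniform orchard hypothesis only controls approximation along rays passing through the origin, any ray $L(\bm{x},\bm{v})$ becomes asymptotically parallel to such a ray when looked at far from the origin. I plan to exploit this to transport the uniform orchard property to $L(\bm{x},\bm{v})$.

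Fix $\bm{x}\in\R^{d+1}$, $\bm{v}\in\Sph^d$ and $\epsilon>0$. For a large parameter $T>0$ to be chosen later, set $\bm{y}_T=\bm{x}+T\bm{v}$, which lies on $L(\bm{x},\bm{v})$, and let $\bm{w}_T=\bm{y}_T/\|\bm{y}_T\|_2\in\Sph^d$. A direct calculation using $\bm{y}_T-T\bm{v}=\bm{x}$ shows that the angle $\alpha_T$ between $\bm{w}_T$ and $\bm{v}$ is of order $\|\bm{x}\|_2/T$, so in particular $\sin\alpha_T\ll 1/T$. I would then apply the uniform orchard hypothesis to the direction $\bm{w}_T$ and to the parameter $t_0=\|\bm{y}_T\|_2-V(\epsilon)$, producing a point $\bm{o}\in\mathcal{O}$ and a real number $t\in(t_0,\|\bm{y}_T\|_2)$ with $\|\bm{o}-t\bm{w}_T\|_2\le\epsilon$.

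The remaining step is to bound $\dist_2(t\bm{w}_T,L(\bm{x},\bm{v}))$. The key point is that the line through the origin of direction $\bm{w}_T$ and the ray $L(\bm{x},\bm{v})$ meet at $\bm{y}_T$, while the auxiliary point $t\bm{w}_T$ lies on the former at distance at most $V(\epsilon)$ from that intersection. Elementary planar trigonometry in the plane spanned by $\bm{v}$ and $\bm{w}_T$ thus gives
\begin{equation*}
\dist_2\bigl(t\bm{w}_T,L(\bm{x},\bm{v})\bigr)\;\le\; V(\epsilon)\cdot\sin\alpha_T\;\ll\;\frac{V(\epsilon)\,\|\bm{x}\|_2}{T}.
\end{equation*}
Choosing $T$ large enough that this last quantity is at most $\epsilon$ and then using the triangle inequality yields $\dist_2(\bm{o},L(\bm{x},\bm{v}))\le 2\epsilon$. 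As $\|\bm{o}\|_2\ge t_0-\epsilon\gg T$ for $T$ large, the point $\bm{o}$ is distinct from $\bm{x}$, so it witnesses that $\dist_2(L(\bm{x},\bm{v}),\mathcal{O}\setminus\{\bm{x}\})\le 2\epsilon$. Letting $\epsilon\to 0^+$ finishes the argument.

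The only subtle point is making sure the closest point on the affine line $\{\bm{x}+s\bm{v}:s\in\R\}$ to $t\bm{w}_T$ actually lies on the ray $L(\bm{x},\bm{v})$ (i.e.\ corresponds to $s\ge 0$), but since $t\bm{w}_T$ lies within $V(\epsilon)$ of $\bm{y}_T=\bm{x}+T\bm{v}$ and $T$ is taken arbitrarily large, this is automatic. Otherwise, the proof is essentially a uniform bookkeeping of the asymptotic alignment between $L(\bm{x},\bm{v})$ and the radial line through a distant point of it, and I do not anticipate serious obstacles beyond the geometric bookkeeping just described.
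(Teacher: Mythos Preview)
Your argument is correct and follows essentially the same approach as the paper: both proofs exploit that $L(\bm{x},\bm{v})$ intersects a radial line whose direction is arbitrarily close to $\bm{v}$, apply the uniform orchard property along that radial line near the intersection point, and then use the triangle inequality together with the smallness of the angle to transfer the approximation back to $L(\bm{x},\bm{v})$. The only cosmetic difference is that the paper fixes the perturbed direction $\bm{v}'$ with $\|\bm{v}-\bm{v}'\|_2=\epsilon/(4\pi V(\epsilon/2))$ directly, whereas you parametrise by a large $T$ and let $T\to\infty$; the underlying geometry and estimates are the same.
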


\begin{proof}
Let $V$ denote a visibility function for the uniform orchard $\mathcal{O}$. Fix a vector $\bm{x}\in\R^{d+1}$, a direction $\bm{v}\in\Sph^d$ and a real $\epsilon>0$. The goal is to show that $\dist_2\left(L(\bm{x}, \bm{v}), \mathcal{O}\right)\le \epsilon$, where $L(\bm{x}, \bm{v})$ is the ray emanating from $\bm{x}$ in direction $\bm{v}$ as defined in~\eqref{defhalfline}. 

Let $\bm{v}'\in\Sph^d$ be a direction such that $L(\bm{x}, \bm{v})\cap L(\bm{0}, \bm{v}')\neq\emptyset$ and 
\begin{equation}\label{dotprod}
\left\|\bm{v}-\bm{v}'\right\|_2=\epsilon\cdot\left(4\pi\cdot V\left(\epsilon/2\right)\right)^{-1}.
\end{equation}
 Set $\left\{\bm{w}\right\}=L(\bm{x}, \bm{v})\cap L(\bm{0}, \bm{v}')$ and let $t_0\ge 0$ be such that $\bm{w}=t_0\bm{v}'$. Since $\mathcal{O}$ is a uniform orchard, there exists 
\begin{equation}\label{boundt}
0\le t\le V\left(\epsilon/2\right)
\end{equation} 
and $\bm{y}\in\mathcal{O}$ such that 
\begin{equation}\label{distmeas}
\left\|(t_0+t)\cdot\bm{v}'-\bm{y}\right\|_2\;=\; \left\|\bm{w}+t\cdot\bm{v}'-\bm{y}\right\|_2\;\le\;\frac{\epsilon}{2}.
\end{equation}

Consequently, 
\begin{align*}
\dist_2\left(L(\bm{x}, \bm{v}), \mathcal{O}\right)\;&\le\; \left\|\left(\bm{w}+t\cdot\bm{v}\right)-\bm{y}\right\|_2\\
&\le\; \left\|\bm{w}+t\cdot\bm{v}'-\bm{y}\right\|_2+t\cdot\left\|\bm{v}-\bm{v}'\right\|_2\\
&\underset{\eqref{dotprod}, \eqref{boundt}\&\eqref{distmeas}}{\le}\; \epsilon.
\end{align*}
This completes the proof.
\end{proof}

Proposition~\ref{propunorvis} justifies the claim made in the introduction, namely that the concepts of an orchard on the one hand and that of a uniform orchard on the other are rather different in nature when considering their visibility properties. This can already be seen in the case of spiral sets, which can be both orchards and have a non--empty set of visible points (compare with the statement of Proposition~\ref{propunorvis}). As established in the following statement, an example of such spiral in the plane is obtained by considering the point set $$\Lambda=\left\{\sqrt{n}\cdot \exp\left(2i\pi\cdot p/k\right)\right\}_{n\ge 1}.$$ Here, given an integer $n\ge 1$, the integers $k$ and $p$ are defined by the unique decomposition of $n$ as 
\begin{equation}\label{defn}
n=\frac{k(k+1)}{2}+p\qquad \textrm{with} \qquad k\ge 1\qquad  \textrm{and} \qquad 0\le p\le k. 
\end{equation}

\begin{prop}
The spiral $\Lambda$ is an orchard with visibility $V(\epsilon)\ll\epsilon^{-1}$ but has a non--empty set of visible points.
\end{prop}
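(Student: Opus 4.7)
The plan is to establish the two assertions separately.

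For the orchard property, I would argue directly from Definition~\ref{orchard}. Given $\epsilon > 0$ and a direction $\bm{v} = e^{2i\pi\theta} \in \Sph^1$ (with $\theta \in [0, 1)$ without loss of generality), Dirichlet's theorem on rational approximation, applied with the parameter $Q = \lceil 2\pi\sqrt{2}/\epsilon \rceil$, yields integers $1 \le k \le Q$ and $p \in \{0, 1, \ldots, k\}$ such that $|\theta - p/k| \le 1/(kQ)$. Setting $n = k(k+1)/2 + p$, $\bm{o} = \sqrt{n}\,e^{2i\pi p/k} \in \Lambda$ and $t = \sqrt{n}$, the elementary bound $|e^{i\alpha} - e^{i\beta}| \le |\alpha - \beta|$ combined with $\sqrt{n} \le \sqrt{2}\,k$ yields $\|\bm{o} - t\bm{v}\|_2 \le 2\pi\sqrt{n}/(kQ) \le \epsilon$, while $t = \sqrt{n} \le \sqrt{2}\,Q \ll \epsilon^{-1}$. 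This realises the orchard property with visibility function $V(\epsilon) \ll \epsilon^{-1}$.

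For the existence of a visible point, the approach is to exhibit an explicit pair $(\bm{x}_0, \bm{v}_0)$ for which the ray $L(\bm{x}_0, \bm{v}_0)$ stays at positive distance from $\Lambda$. The natural candidate is the horizontal ray $L = \{(t, 1/2) : t \ge 0\}$ emanating from $\bm{x}_0 = (0, 1/2)$ in the direction $\bm{v}_0 = (1, 0)$~: it lies just above the positive $x$-axis, on which all the points of $\Lambda$ with $p \in \{0, k\}$ accumulate. For this choice the goal is to prove $\dist_2(L, \Lambda) = 1/2$. Points with $p \in \{0, k\}$ lie on the positive $x$-axis at $(\sqrt{n}, 0)$ and therefore contribute distance exactly $1/2$, so it remains to show that each point with $p \in \{1, \ldots, k-1\}$ contributes distance strictly greater than $1/2$.

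For such a point $P = (x, y)$, a case analysis by location handles all the configurations. The regime $y \le 0$ or $y \ge 1$ is immediate since $|y - 1/2| \ge 1/2$. The regime $y \in (0, 1)$ with $x \ge 0$ forces $p/k \in (0, 1/4]$, in which range the bounds $\sqrt{n} \ge k/\sqrt{2}$ and $\sin(2\pi/k) \ge 4/k$ (the latter valid for $k \ge 4$, with the range being vacuous when $k \le 3$) yield $y \ge 2\sqrt{2} > 1$, contradicting the assumption. The regime $y \in (0, 1)$ with $x < 0$ and $|x| < 1/2$ would force $x^2 + y^2 = n < 5/4$, hence $n = 1$; but then $k = 1$ and $\{1, \ldots, k-1\}$ is empty, so this configuration is vacuous. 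Any remaining configuration has $x < 0$ with $|x| \ge 1/2$ and therefore contributes distance at least $|x| \ge 1/2$. Since $\bm{x}_0 \notin \Lambda$ (the norm condition $\|\bm{x}_0\|_2 = 1/2 < 1 \le \sqrt{n}$ rules this out), this establishes $\bm{x}_0 \in \textrm{Vis}(\Lambda, \bm{v}_0)$.

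The main obstacle I anticipate is the careful bookkeeping of boundary configurations (small $k$, angles $p/k$ near the special values $0, 1/4, 1/2, 3/4$) in the case analysis of the second part; conceptually, however, both parts of the argument are quite direct.
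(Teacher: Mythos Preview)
Your proposal is correct and follows essentially the same approach as the paper: Dirichlet's theorem for the orchard property, and the observation that nonzero values of $\sqrt{n}\,\lvert\sin(2\pi p/k)\rvert$ are bounded below for the visible--point claim. The paper frames the latter as the existence of a vacant horizontal strip $\{0<y<\delta\}$ rather than analysing the single ray at height $y=1/2$ by cases, which streamlines the argument slightly but rests on the same estimate.
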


The spiral $\Lambda$ is depicted in Figure~\ref{FigDirichletSpiral} below. In the same way as for the standard lattice in P\'olya's original orchard problem, the points close to the origin play a preponderant role in blocking rays emanating from the origin. By constrast, horizontal half--lines not passing through the origin but close to it  determine visible points.

\begin{figure}[h!]
	\centering
	\includegraphics[scale=0.75]{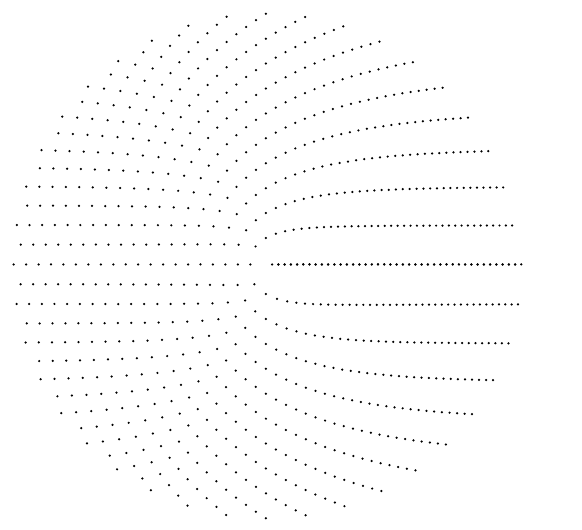}
	\caption{The Spiral $\Lambda$.}
	\label{FigDirichletSpiral}
\end{figure}

\begin{proof}
Let $\theta\in\left[0, 1\right]$ and $N\ge 3$. From Dirichlet's Theorem in Diophantine approximation, there exists a rational $p/k\in \left[0,1\right]$ with $1\le k\le N$ such that $\left|\theta-p/k\right|\le 1/(kN)$. Therefore, $\left|\exp\left(2i\pi\theta\right)-\exp\left(2i\pi (p/k)\right)\right|\le 2\pi/(kN)$. Setting $n=k(k+1)/2+p$, one thus gets that 
\begin{align*}
\left|\sqrt{n}\cdot\exp\left(2i\pi\theta\right)-\sqrt{n}\cdot\exp\left(2i\pi\frac{p}{k}\right)\right|\;\le\; \sqrt{\left(\frac{k(k+1)}{2}+p\right)}\cdot\frac{2\pi}{kN}\;\le\;  \frac{4\pi}{N}\cdot
\end{align*}
The visibility claim then follows upon noticing that $\sqrt{n}\le N$ when $N\ge 3$.\\

The claim that  the set of visible points is non--empty is implied by the existence of a vacant strip of the form $0<y<\delta$ for some $\delta>0$~: this is saying that the set of points $(x,y)\in\R^2$ such that their second coordinate satisfies this contraint does not contain any point of the spiral. To see this, note that the distance between a point $\sqrt{n}\cdot\exp\left(2i\pi\frac{p}{k}\right)$ in the spiral set (identified with the planar point $\sqrt{n}\left(\cos\left(2\pi p/k\right), \, \sin\left(2\pi p/k\right)\right)$) and the axis $\left\{y=0\right\}$ is $\sqrt{n}\left|\sin\left(2\pi p/k\right)\right|$. This quantity is non--zero if and only if $p\not\in\left\{0, k\right\}$, in which case $$\sqrt{n}\left|\sin\left(2\pi p/k\right)\right|\;\ge\; \frac{\sqrt{n}}{k}\;\underset{\eqref{defn}}{\gg}\; 1.$$ This concludes the proof.

\end{proof}

The rest of this section is devoted to the proof of the remaining statements in Theorem~\ref{main}.

\begin{proof}
\textbf{Proof of Point~\ref{pt1}.} By definition, the spiral $\mathfrak{S}_d\left(\bm{U}\right)=\left\{\sqrt[d+1]{n}\cdot\bm{u}_n\right\}_{n\ge 1}$ is an orchard with visibility $V$ if and only if for any unit vector $\bm{v}\in\Sph^d$ and any real $\epsilon>0$, there exists a real $0<t<V(\epsilon)$ such that for some index $n\ge 1$, $\left\|t\cdot\bm{v}-\sqrt[d+1]{n}\cdot\bm{u}_n\right\|_2\le \epsilon.$  From~\eqref{identity}, $$\left\|t\cdot\bm{v}-\sqrt[d+1]{n}\cdot\bm{u}_n\right\|_2\;\asymp\; \left|t-\sqrt[d+1]{n}\right|+\sqrt{t\sqrt[d+1]{n}}\cdot\dist_{\Sph^d}\left(\bm{u}_n, \bm{v}\right)\;\ll\; \epsilon.$$ 

Assuming first that $\mathfrak{S}_d\left(\bm{U}\right)$ is an orchard, one infers from this identity in the same way as in the above proof of the lower bound~\eqref{boundvisibility}, on the one hand that  for $\epsilon>0$ small enough, $\sqrt[d+1]{n}\asymp t \ll V(\epsilon)$ and, on the other, that $\dist_{\Sph^d}\left(\bm{u}_n, \bm{v}\right)\ll \epsilon/\sqrt[d+1]{n}.$ Conversely, assuming that one can find an index $n\ll V(\epsilon)^{d+1}$ such that  $\dist_{\Sph^d}\left(\bm{u}_n, \bm{v}\right)\ll \epsilon/\sqrt[d+1]{n}$, setting $t=\sqrt[d+1]{n}$ proves that $\mathfrak{S}_d\left(\bm{U}\right)$ is an orchard with visibility a function $W$ as in~\eqref{defW}. \\

\textbf{Proof that 2(a) implies 2(b).} Assume that the spiral $\mathfrak{S}_d\left(\bm{U}\right)$ is a uniform orchard with visibility $W$ as defined in~\eqref{defW} for some constants $\kappa_{\bm{U}}>0$ and $c_{\bm{U}}\ge 1$. Fix $\epsilon>0$ and an integer 
\begin{equation}\label{hV}
h\ge W(\epsilon).
\end{equation}

Given $\delta>0$, one can cover the unit sphere $\Sph^d$ with a number $N_d$ of spherical caps of radius $\delta$ satisfying the estimate $N_d\asymp \delta^{-d}$ (\footnote{To see this, consider a maximal packing of $\Sph^d$ with spherical caps of radius $\delta/2$. This gives a covering by balls with radius $\delta$. If there are $N_d$ elements in this packing, then elementary volume considerations imply the relation $N_d\delta^d\asymp 1$.}). Denote by $\bm{\Sigma}=\left\{\bm{\sigma}_k\; :\; 1\le k\le C\cdot h^d\cdot W(\epsilon)\right\}$ the centres of spherical caps with  radius $\delta=(h\cdot\sqrt[d]{W(\epsilon)})^{-1}$ covering $\Sph^d$, where $C>0$ is some constant. 

Fix an integer $k$ such that $1\le k\le C\cdot h^d\cdot W(\epsilon)$. Since $\mathfrak{S}_d\left(\bm{U}\right)$ is a uniform orchard, there exists a real $t\in \left[h, \, h+W\left(\epsilon\right)\right]$  and an integer $n\ge 1$ such that 

\begin{align*}
\left\|\sqrt[d+1]{n}\cdot\bm{u}_n-t\cdot\bm{\sigma}_k\right\|_2
\le\;\epsilon.
\end{align*}

Decompose $t$ as $t=\sqrt[d+1]{h^{d+1}+s}$ for some real $s\in\left[0, \, \left(h+W\left(\epsilon\right)\right)^{d+1}-h^{d+1}\right]$ and the integer $n$ as $n=h^{d+1}+s+p$ for some real $p$. From~\eqref{identity}, the above implies that 

\begin{align}\label{ineqdist}
\left|\sqrt[d+1]{h^{d+1}+s+p} - \sqrt[d+1]{h^{d+1}+s}\right|+\sqrt[(d+1)/2]{\left(h^{d+1}+s+p\right)\cdot \left(h^{d+1}+s\right)}\cdot\dist_{\Sph^d}\left(\bm{u}_n, \bm{\sigma}_k\right) \; \ll\;\epsilon.
\end{align}

An elementary Taylor expansion argument shows that, provided that the pa\-ra\-me\-ter $\eta>0$ is small enough, \sloppy the inequality $\left|\sqrt[d+1]{1+x}-1\right|<\eta$ implies that $x\in\left(-2\eta (d+1), 2\eta (d+1)\right)$. Therefore, one infers from~\eqref{ineqdist} that, for small enough $\epsilon$,  $$\frac{\left|p\right|}{h^{d+1}+s}\;\ll\; \frac{\epsilon}{\sqrt[d+1]{h^{d+1}+s}}, \quad i.e.\quad \left|p\right|\;\ll\; \epsilon\cdot\left(h^{d+1}+s\right)^{d/(d+1)}\;\underset{\eqref{hV}}{\ll}\; \epsilon\cdot h^d.$$ 

This is saying that, given an integer $k$ such that $1\le k\le C\cdot h^d\cdot W\left(\epsilon\right)$, an integer $n$ can be chosen such that $0\le n-h^{d+1} \ll h^d\cdot W\left(\epsilon\right)$ and  $\dist_{\Sph^d}\left(\bm{u}_n, \bm{\sigma}_k\right)  \ll\epsilon/h$ (the latter relation follows from~\eqref{ineqdist}). Therefore, since the spherical caps with radius $\delta=(h\cdot\sqrt[d]{W(\epsilon)})^{-1}$  centered at the elements of $\bm{\Sigma}$ cover $\Sph^d$, and since $\delta\ll \epsilon/h$ from~\eqref{boundvisibility}, there exists a constant $K>0$ such that the spherical caps with radius a constant multiple of $\epsilon/h$ centered at the elements of the set $A_{\bm{U}}\left(h, K\cdot h^d\cdot W\left(\epsilon\right) \right)$ cover $\Sph^d$. This concludes the proof.\\

\textbf{Proof that 2(b) implies 2(a).} Let $\bm{v}\in\Sph^d$, $\epsilon>0$ and $h\ge 0$. The goal is to show that there exists an index $n\ge 1$ such that the inequality $\left\|\sqrt[n+1]{n}\cdot\bm{u_n}-t\cdot\bm{v}\right\|_2\le \kappa^{(1)}_{\bm{U}}\cdot\epsilon$ holds  for some real $t\in \left[h, h+c'_{\bm{U}}\cdot V\left(\kappa^{(2)}_{\bm{U}}\cdot\epsilon\right)\right]$, where the constant $\kappa^{(1)}_{\bm{U}}, \kappa^{(2)}_{\bm{U}}>0$ and $c'_{\bm{U}}\ge 1$  depend only on the spherical sequence $\bm{U}=\left\{\bm{u}_n\right\}_{n\ge 1}$. 

Assume first that inequality~\eqref{hV} holds, where $W$ is defined as in~\eqref{defW} with the constants $c_{\bm{U}}\ge 1$ and $\kappa_{\bm{U}}>0$ given in the statement of~2(b). By assumption, there exists a constant $K>0$ and an integer $i$ such that $1\le i\le K\cdot h^d\cdot W\left(\epsilon\right)$ and $\dist_{\Sph^d}\left(\bm{u}_{h^{d+1}+i},\, \bm{v}\right)\ll \epsilon/h$. Set $$n=h^{d+1}+i \quad \textrm{and} \quad t=\sqrt[d+1]{n}.$$ From assumption~\eqref{hV} and from the inequality $\left|\sqrt[d+1]{1+x}-1\right|\le 2\left|x\right|$ valid whenever the left--hand side is well--defined, it follows that $0\le t-h\ll W\left(\epsilon\right)$. Furthermore, it holds that 
\begin{align*}
\left\|\sqrt[n+1]{n}\cdot\bm{u_n}-t\cdot\bm{v}\right\|_2\;=\; \sqrt[d+1]{n}\cdot\dist_{\Sph^d}\left(\bm{u}_n, \bm{v}\right)\;\ll\;\sqrt[d+1]{h^{d+1}+i}\cdot\frac{\epsilon}{h}\;\underset{\eqref{hV}}{\ll}\; \epsilon.
\end{align*}
This completes the proof under assumption~\eqref{hV}. If $h\le W\left(\epsilon\right)$, the previous case implies the existence of an index $n\ge 1$ and of a real $t$ such that $W\left(\epsilon\right) \,\le t\le \, W\left(\epsilon\right) + c'_{\bm{U}}\cdot V\left(\kappa^{(2)}_{\bm{U}}\cdot\epsilon\right)$  and $\left\|\sqrt[n+1]{n}\cdot\bm{u_n}-t\cdot\bm{v}\right\|_2\le \kappa^{(1)}_{\bm{U}}\cdot\epsilon$ for suitable values of the various constants.  As $V$ is monotonic as $\epsilon \rightarrow 0^+$, $$W\left(\epsilon\right) + c'_{\bm{U}}\cdot V\left(\kappa^{(2)}_{\bm{U}}\cdot\epsilon\right)\;\le\; \tilde{c}_{\bm{U}}\cdot V\left(\tilde{\kappa}_{\bm{U}}\cdot\epsilon\right)\quad \textrm{for some} \quad \tilde{\kappa}_{\bm{U}}>0\quad \textrm{and}\quad \tilde{c}_{\bm{U}}\ge 1.$$
This is enough to conclude that the set $\mathfrak{S}_d\left(\bm{U}\right)$ is a uniform orchard with visibility $\epsilon\mapsto \tilde{c}_{\bm{U}}\cdot V\left(\tilde{\kappa}'_{\bm{U}}\cdot\epsilon\right)$ for some $\tilde{\kappa}'_{\bm{U}}>0$. This establishes~2(a).\\

\textbf{Proof of Point 2(i)}. Assume that $R_d\left(\bm{U}, C\right)<\infty$ for some $C>0$. From the definitions of the quantity $R_d\left(\bm{U}, C\right)$ and of the set $A_{\bm{U}}\left(h,x\right)$ (see~\eqref{defAuhx}), one infers that for all $N, h\ge 1$, $$R_d\left(A_{\bm{U}}\left(h,C\cdot N\right)\right)\;\ll\; \frac{1}{\sqrt[d]{N}}.$$ Specialising this relation to the case when, given $h\ge 1$  and $\epsilon>0$, $N$ is the integer part of $\left((h\cdot\epsilon^{-1})/(C)\right)^d$ shows that Point~2(b) holds with $W\left(\epsilon\right)=A\cdot\epsilon^{-d}$ for some $A>0$, whence the claim. \\

\textbf{Proof of Point~2(ii)}. The assumption on the limit implies that the function $V$ in~\eqref{defvisi} is well--defined. It is then immediate that the condition stated in Point~2(b) is satisfied (with $W=V$), whence the claim.\\

\textbf{Proof of Point~3(i)}. A line $(L)$ in $\R^{d+1}$ can be parametrised as the set $\left\{\lambda\bm{v}+t\bm{w}\; :\; t\in\R\right\}$, where $\bm{v}, \bm{w}\in\Sph^d$ are orthogonal and where $\lambda\ge 0$ is the distance from $(L)$ to the origin. Given $\epsilon>0$, a point in $\mathfrak{S}_d\left(\bm{U}\right)\;=\;\left\{\sqrt[d+1]{n}\cdot \bm{u}_n\right\}_{n\ge 1}\subset\R^{d+1}$ lies $\epsilon$--close to this line if and only if there exist an integer $n\ge 1$ and a parameter $t\in\R$ such that $\left\|\sqrt[d+1]{n}\cdot \bm{u}_n - \left(\lambda\bm{v}+t\bm{w}\right)\right\|_2\;\le\;\epsilon.$ From~\eqref{identity}, up to multiplicative constants, this is equivalent to asking that \begin{equation}\label{eqtinterm}\left|\sqrt[d+1]{n}-\sqrt{\lambda^2+t^2}\right|\;\ll\; \epsilon\qquad \textrm{and}\qquad \sqrt{\sqrt[d+1]{n}\cdot\sqrt{\lambda^2+t^2}}\cdot \dist_{\Sph^d}\left(\bm{u}_n, \, \frac{\lambda\bm{v}+t\bm{w}}{\sqrt{\lambda^2+t^2}}\right)\;\ll\;\epsilon.\end{equation} Since $n\ge 1$, the first relation implies that $\sqrt[d+1]{n}\asymp\sqrt{\lambda^2+t^2}$ which, together with the second relation, is easily seen to yield the claimed equivalence.\\

\textbf{Proof of the converse implication in Point~3(ii)}. The goal is to construct a spiral $\mathfrak{S}_d\left(\bm{U}\right)$ which has an empty set of visible points but which is not a uniform orchard. To this end, consider first a uniform orchard $\mathfrak{S}'_d\left(\bm{U}'\right)\;=\;\left\{\sqrt[d+1]{n}\cdot \bm{u}'_n\right\}_{n\ge 1}$ with visibility, say, $V\left(\epsilon\right)\ll \epsilon^{-d}$ (such a point set exists from the comments following the statement of Theorem~\ref{main}). Let $\rho_n=2\cdot V\left(2^{-n}\right)$, in such a way that the quantity $n!-\rho_n$ is positive for any $n$ larger than some integer  $n_0\ge 1$. For $n\ge n_0$, define $\mathcal{A}_n$ as  the annulus  with outer radius $n!$ and inner radius $n!-\rho_n$.\\

Let $\delta>0$ and $\bm{v}_0\in\Sph^d$. Let $\mathcal{D}$ be the intersection between  $\R^{d+1}\backslash \left(\cup_{n\ge n_0}\mathcal{A}_n\right) $ and the $\delta$--neighbourhood of the ray emanating from the origin in direction $\bm{v}_0\in\Sph^d$.  The spiral $\mathfrak{S}_d\left(\bm{U}\right)$ is then defined from the uniform orchard $\mathfrak{S}'_d\left(\bm{U}'\right)$ as follows~: if the index $n\ge 1$ is such that $\sqrt[d+1]{n}\cdot \bm{u}'_n$ lies in $\mathcal{D}$, then set $\bm{u}_n=\bm{u}'_{m_n}$, where $m_n\ge n$ is the smallest index such that $\sqrt[q]{m_n}\cdot\bm{u}'_{m_n}\not\in \mathcal{D}$ (the existence of this index is guaranteed by the uniform orchard property of $\mathfrak{S}'_d\left(\bm{U}'\right)$). Otherwise, set $\bm{u}_n=\bm{u}'_n$. \\

Clearly, $\mathfrak{S}_d\left(\bm{U}\right)$ is not a uniform orchard since it has no point in the region $\mathcal{D}$ which contains arbitrarily long line segments (supported in the direction determined by $\bm{v}_0$). To show that it has an empty set of visible points, consider, given a point $\bm{x}\in\R^{d+1}$ and a direction $\bm{v}\in\Sph^d$,  the half--line $L\left(\bm{x}, \bm{v}\right)$ as defined in~\eqref{defhalfline}. For $n\ge n_0$ large enough, there exist exactly two points $\bm{a}_n, \bm{b}_n\in L\left(\bm{x}, \bm{v}\right)$ with smallest and largest norms, respectively, intersecting the annulus $\mathcal{A}_n$. Denote by $\bm{w}\in\Sph^d$ the direction of the half--line $L'_n$ joining the origin to $\bm{b}_n$ and let $\bm{c}_n$ be the point with minimal norm in $ \mathcal{A}_n$ lying in $L'_n$. Clearly, the largest distance between a point in the intersection $L\left(\bm{x}, \bm{v}\right)\cap \mathcal{A}_n$ and a point in $L'_n\cap \mathcal{A}_n$ is, for $n$ large enough, the quantity $\varepsilon_n=\left\|\bm{a}_n-\bm{c}_n\right\|_2$.\\

Elementary trigonometric considerations then show that $\varepsilon_n$ is at most a constant multiple (depending on $\bm{x}$ and $\bm{v}$) of $\left\|\bm{b}_n-\bm{c}_n\right\|_2/\left\|\bm{b}_n\right\|_2=\rho_n/n!$. Thus, $\lim_{n\rightarrow\infty}\epsilon_n=0$.\\

Since $\mathfrak{S}_d\left(\bm{U}\right)$ coincides with $\mathfrak{S}'_d\left(\bm{U}'\right)$ on $\mathcal{A}_n$ and since $\mathfrak{S}'_d\left(\bm{U}'\right)$ is a uniform orchard, there exists a point in $\mathfrak{S}'_d\left(\bm{U}'\right)$ which is $2^{-n}$ close to $L'_n\cap \mathcal{A}_n$, and therefore $\left(2^{-n}+\varepsilon_n\right)$--close to $L\left(\bm{x}, \bm{v}\right)\cap \mathcal{A}_n$. Upon letting $n$ tend to infinity, this shows that $\dist_2\left(L\left(\bm{x}, \bm{v}\right),  \mathfrak{S}_d \left(\bm{U}\right)\right)=0$, which concludes the proof.\\

\textbf{Proof of Point~\ref{pt4}.} Let $\epsilon>0$. A line segment $(\Delta)$ with length $V\left(\epsilon\right)$ in $\R^{d+1}$ can be parametrised as the set $\left\{\lambda\bm{v}+t\bm{w}\; :\; t_0\le t\le t_0+V\left(\epsilon\right)\right\}$, where $\bm{v}, \bm{w}\in\Sph^d$ are orthogonal and where $\lambda\ge 0$ and $t_0$ are reals. A point in $\mathfrak{S}_d\left(\bm{U}\right)\;=\;\left\{\sqrt[d+1]{n}\cdot \bm{u}_n\right\}_{n\ge 1}\subset\R^{d+1}$ lies $\epsilon$--close to this line segment if and only if there exist an integer $n\ge 1$ and a parameter $t\in\left[t_0,\, t_0+V\left(\epsilon\right)\right]$ such that~\eqref{eqtinterm} holds. The argument is then concluded in the same way as in the proof of Point~3(i).\\

\end{proof}

%
%

\end{document}